\theoremstyle{plain}
\newtheorem{theorem}{Theorem}
\newtheorem{proposition}{Proposition}
\theoremstyle{definition}
\theoremstyle{remark}
\newtheorem{remark}{Remark}
\newcommand{\be}{\begin{equation}}
\newcommand{\ee}{\end{equation}}
\title{On generators with infinite entropy}
\author{B.M. Gurevich\thanks{The work is supported in part
by the RFBR grant 14-01-00379 a}\\ MSU and IITP RAS}
\date{}
\begin{document}
\maketitle

\bigskip

\begin{abstract}
Many years ago B.S. Pitskel observed that the metric entropy of the
shift transformation in the sample space of a stationary random
process $X=\{X_n,\,n\in \mathbb Z\}$ with a countable number of
states is equal to the conditional entropy
$H(X_0|X_{-1},X_{-2},\dots)$ if $X$ is a stationary Markov chain (in
which case the above conditional entropy is $H(X_0|X_{-1}))$,
whether the entropy $H(X_0)$ is finite or not, while in general the
statement is not true. In this note we present a class of processes
for which Pitskel's observation holds, despite the fact that no of
these processes is a Markov chain of some order.
\end{abstract}

\section{Introduction}
We use standard terminology, notation and basic facts from entropy
theory of dynamical systems (see, e.g., \cite{KFS}). It is well
known that every ergodic automorphism of a Lebesgue space has a
countable generator (generating partition) and every automorphism
with finite entropy has a finite generator. A generator is very
useful for studying the automorphism. In particular, if an
automorphism has finite entropy, the entropy can be immediately
expressed in terms of every generator with finite entropy. However
sometimes such a generator is unknown, while another generator, good
in all senses but with infinite entropy, is at hand. The following
example of this situation was considered in \cite{P}.

Let $X=\{X_n,\,n\in\mathbb Z\}$ be a discrete time stationary
process whose states form an infinite countable set $A$, and
$\Omega=A^\mathbb Z$ be its sample space equipped with the
corresponding measure $\nu$. Define the shift transformation
$T:\Omega\to\Omega$ by
$$
(T\omega)_i=(\omega)_{i-1},\ \ i\in\mathbb Z.
$$
Let $\alpha$ be the partition of $\Omega$ into one-dimensional
cylinders
$$
C_a:=\{\omega\in\Omega:\omega_0=a\},\ \  a\in A.
$$
Since $\nu$ is a $T$-invariant probability measure and $\alpha$ is a
generator for $T$, we have
\begin{equation}
\label{entropT} h_\nu(T)=h_\nu(T,\alpha)=
H_\nu(\alpha|\vee_{n=1}^{\infty}T^{-n}\alpha),
\end{equation}
whenever $H_\nu(\alpha)<\infty$, while if $H_\nu(\alpha)=\infty$,
this is in general not the case. However, as was observed in
\cite{P}, (\ref{entropT}) holds, provided that $\nu$ is a Markov
measure, even if $H_\nu(\alpha)=\infty$. We remark that in this case
\begin{equation}
\label{EntropCond} H_\nu(\alpha|\vee_{n=1}^{\infty}T^{-n}\alpha)
=H_\nu(\alpha|T^{-1}\alpha).
\end{equation}
The aim of this note is to present a class of non-Markov measures on
$\Omega$ for which the situation is exactly as in the Markov case.
These measures are induced by stationary random processes $\tilde X$
with the same state set $A$. The construction is the following. Let
$X=\{X_n,\,n\in\mathbb Z\}$ be a Markov chain with states $a\in A$,
transition probabilities $p_{a,b}$, $a,b\in A$, and stationary
probabilities $\pi_a$, $a\in A$. Denote the corresponding Markov
measure on $\Omega$ by $\mu$ and assume that
\begin{equation}
\label{cond} -\sum_{a\in A}\pi_a\log\pi_a=\infty,\ \ -\sum_{a,b\in
A}\pi_a\,p_{a,b}\log\,p_{a,b}<\infty
\end{equation}
(see Section 3 below for an explicite example of such a Markov
chain). Consider a function $f_0:A\to\mathbb N$ with
\begin{equation}
\label{condf} \sum_{a\in A}\pi_af_0(a)<\infty
\end{equation}
and define $f(\omega):=f_0(\omega_0)$. One can rewrite (\ref{cond})
and (\ref{condf}) as
\begin{equation*}
\label{cond1} H_\mu(\alpha)=\infty, \ \
H_\mu(\alpha|T^{-1}\alpha)<\infty,\ \ \int_\Omega fd\mu<\infty.
\end{equation*}

We now define what is called a suspension automorphism $\tilde
T=(T,f)$ constructed by $T$ and the `roof' function $f$. This
$\tilde T$ acts in the space
$$
\tilde\Omega:=\{\tilde\omega=(\omega,u):\omega\in\Omega,\ 0\le u\le
f(\omega)\}
$$
and is defined by
$$
\tilde T(\omega,u)=\left\{\begin{aligned}(&\omega,u+1),\ u<f(\omega),\\
&(T\omega,0),\ u=f(\omega).\\ \end{aligned}\right.
$$
It is easy to check that $\tilde T$ preserves the probability
measure $\tilde\mu:=\gamma(\mu\times\kappa)|_{\tilde\Omega}$, where
$\kappa$ is the counting measure on $\mathbb Z_+$ and
$\gamma=1/\int_\Omega fd\mu$.

Denote by $\tilde\alpha$ the partition of $\tilde\Omega$ into atoms
of the form
$$
\tilde C_a:=\{(\omega,u):\omega\in C_a,\ u\le f_0(a)\}, \ \ a\in A,
$$
and let
$$
\tilde X_n(\tilde\omega)=a {\rm \ \ iff\ \ } \tilde
T^n\tilde\omega\in\tilde C_a,\ \ a\in A,\ \ n\in\mathbb Z.
$$
The stationary process $\tilde X:=\{\tilde X_n,\ n\in\mathbb Z\}$ is
what we wanted to construct. Its properties are studied in Section
2; in Section 3 we present an explicite example of a Markov chain
that satisfies conditions (\ref{cond}).

\begin{remark}
\label{non-Markov} It is easy to check that if $f$ is unbounded,
then $X_n$ is not a Markov chain (we mean Markov chains of any
order).
\end{remark}

The above-mentioned result  from \cite{P} (see (\ref{entropT})) was
obtained due to Pitskel's observation that, for a countable alphabet
Markov chain $\{X_n\}$ with $H(X_0)<\infty$ or $H(X_0)=\infty$ and
$h(T)<\infty$, where $T$ is the shift in the sample space of
$\{X_n\}$, the Shannon--McMillan--Breiman (SMB) theorem holds true
in the following form (we use the above notation):
\begin{equation}
\label{S-M-B}
\lim_{n\to\infty}\left|\frac{1}{n}\log\mu(C^n(\omega))\right|=
H_\mu(\alpha|T^{-1}\alpha),
\end{equation}
where $(C^n(\omega)$ is the atom of the partition
$\vee_0^{n-1}T^i\alpha$ that contains $\omega$ and where the
$\mu$-a.s. convergence is meant. By (\ref{entropT}),
(\ref{EntropCond}) the right hand side of (\ref{S-M-B}) can be
replaced by $h_\mu(T)$. We proceed in the opposite direction: first
we use Pitscel's result for $\alpha$ and $\mu$ to prove, for
$\tilde\alpha$ and $\tilde\mu$, a similar fact (Theorem
\ref{TeorEntrop}), after which we prove for them the SMB theorem.

\section{Properties of $\tilde X$ and $\tilde T$}
All properties of $\tilde X$ can be expressed in terms of those of
$\tilde T$ and wise versa. We will study $\tilde T$, because this is
a little more convenient. We will refer to the set
$\{(\omega,u)\in\tilde\Omega:u=k\}$ as to the $k$th level. The $k$th
level with the maximal possible $k$ and with the minimal possible
$k$ ($k=0$), will be called the top level and bottom level,
respectively. The former will be denoted by $L$.
\begin{proposition}
\label{generat} If $T$ is ergodic, then the partition $\tilde\alpha$
is a generator for $\tilde T$.
\end{proposition}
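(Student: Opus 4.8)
The plan is to show that the itinerary (name) map attached to $\tilde\alpha$ is injective modulo $\tilde\mu$; this is equivalent to $\bigvee_{n\in\mathbb Z}\tilde T^n\tilde\alpha$ exhausting the Borel $\sigma$-algebra of $\tilde\Omega$ (mod $0$), i.e. to $\tilde\alpha$ being a generator. The first observation is that $\tilde\alpha$ is the pull-back under the projection $(\omega,u)\mapsto\omega$ of the base partition $\alpha$: the atom $\tilde C_a$ is the whole column $\{(\omega,u):\omega_0=a,\ 0\le u\le f_0(a)\}$. Thus the $\tilde\alpha$-name of a point records at each moment $n$ only the current base symbol and says nothing directly about the level $u$; the crux is to recover the level from the dynamics.

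Next I would compute the name explicitly. For $\tilde\omega=(\omega,j)$, tracing the orbit shows that going forward $\tilde T^n\tilde\omega$ stays over the base point $\omega$ through the levels $j,\dots,f_0(\omega_0)$ and then passes to the columns over $T\omega,T^2\omega,\dots$, while going backward it descends to level $0$ and then moves to the columns over $T^{-1}\omega,T^{-2}\omega,\dots$. Since $(T^k\omega)_0=\omega_{-k}$, the name is exactly the bi-infinite word obtained by writing the symbols $\dots,\omega_1,\omega_0,\omega_{-1},\dots$ and repeating each $\omega_i$ over a block of length $f_0(\omega_i)+1$, with the index $n=0$ sitting at offset $j$ above the bottom of the $\omega_0$-block. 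Hence recovering $\tilde\omega$ amounts to parsing this word into its blocks.

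Then comes the reconstruction. Group the name into its maximal constant runs. Because the full bi-infinite orbit contains only complete columns, every finite maximal run of a symbol $a$ has length an integer multiple of $f_0(a)+1$; since $f_0$ is known, such a run splits in a unique way into consecutive blocks of length $f_0(a)+1$. Reading these blocks and using $n=0$ as an anchor (its block is the $\omega_0$-column) recovers every base coordinate $\omega_i$, each block contributing one coordinate, together with the level $j$, which is the offset of $0$ within its block. Consequently, as soon as the run containing $0$ — and in fact every run — is finite, the name determines $\tilde\omega$.

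The only obstruction, and the step I expect to require the ergodicity hypothesis, is an \emph{infinite} maximal run, which can occur only if $\omega$ is eventually constant in one direction, i.e. $\omega_i=a$ for all $i\ge i_0$ or for all $i\le -i_0$. For an ergodic, hence irreducible, chain with infinitely many states no state is absorbing, so $p_{a,a}<1$ for every $a$; by stationarity each set $\{\omega:\omega_i=a\ \forall i\ge i_0\}$ has $\mu$-measure $\pi_a\lim_{N\to\infty}p_{a,a}^N=0$, and the countable union over $a$ and $i_0$ (and symmetrically for the left tail) is still $\mu$-null, hence $\tilde\mu$-null. Therefore for $\tilde\mu$-a.e. $\tilde\omega$ all runs are finite, the reconstruction applies, and the name map is injective mod $0$; this yields that $\tilde\alpha$ is a generator. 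The main difficulty is the parsing ambiguity created by equal neighbouring symbols and by infinite runs, resolved respectively by the known block lengths $f_0(a)+1$ and by the null-set argument above.
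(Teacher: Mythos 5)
Your proof is correct and follows essentially the same route as the paper: both discard the $\tilde\mu$-null set of points whose $(\alpha,T)$-name has an infinite constant tail (which is exactly where parsing fails), and then reconstruct the base name from the $\tilde\alpha$-name by exploiting the known block lengths $f_0(a)+1$ — the paper via its ``jump set,'' you via splitting maximal constant runs into blocks. The only differences are cosmetic and in your favor: you justify the null-set claim by a direct Markov computation ($\pi_a p_{a,a}^N\to 0$) where the paper simply invokes ergodicity, and you explicitly recover the level $u$ as the offset of the time-zero index within its block, a point the paper leaves implicit.
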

\begin{proof}
For every point $\omega\in\Omega$, we call the sequence of symbols
$$
\mathfrak n(\omega,\alpha,T)=(a_i(\omega),\ i\in\mathbb Z)
$$
the $(\alpha,T)$-name of $\omega$ if $T^i\omega\in C_{a_i(\omega)},\
i\in\mathbb Z$, and call its finite susequence corresponding to $i$
from $i_1$ to $i_2$ the $(\alpha,T)$-subname of $\omega$ from $i_1$
to $i_2$. In a similar way we define $\mathfrak
n(\tilde\omega,\tilde\alpha,\tilde T)$, the $(\tilde\alpha,\tilde
T)$-name of $\tilde\omega\in\tilde\Omega$ and its subnames. To prove
the proposition it suffices to find a set
$\bar\Omega\subset\tilde\Omega$ with $\tilde\mu(\bar\Omega)=1$ such
that no two different points in $\bar\Omega$ have identical
$(\tilde\alpha,\tilde T)$-names.

For $\bar\Omega$ we take the set of points $\tilde\omega=(\omega,u)$
such that the $(\alpha,T)$-name of $\omega$ contains no infinite
tails of identical symbols (neither in $-\infty$ or in $+\infty$).
From ergodicity of $T$ it follows that $\tilde\mu(\bar\Omega)=1$.

We prove that for every $\tilde\omega=(\omega,u)\in\bar\Omega$, the
$(\tilde\alpha,\tilde T)$-name of $\tilde\omega$ determines the
$(\alpha,T)$-name of $\omega$, from which it follows that
$\tilde\alpha$ is a generator for $\tilde T$, because $\alpha$ is a
generator for $T$.

Given $\tilde\omega\in\bar\Omega$, we introduce the jump set
$$
J(\tilde\omega):=\{i\in\mathbb Z:a_i(\tilde\omega)\ne
a_{i-1}(\tilde\omega)\}.
$$
It is clear that if $i\in J(\tilde\omega)$, then $\tilde
T^i\tilde\omega$ lies at the bottom level and
$a_i(\tilde\omega)=a_l(\omega)$ for some $l$ belongs to $\mathfrak
n(\omega,\alpha,T)$. Moreover, if $i'>i$, $i'\in J(\tilde\omega)$,
and no $j$ between $i$ and $i'$ belongs to $J(\tilde\omega)$, then
$i'=i+m'(f_0(a_i(\tilde\omega))+1)$ for some $m'\in\mathbb N$. Hence
all $a_j(\tilde\omega)$ with $j$ of the form
$i+m(f_0(a_i(\tilde\omega))+1)$, $m\in\mathbb Z_+$, $0\le m\le m'$,
belong to $\frak n(\omega,\alpha,T)$ (with other indices), while the
remaining $a_j(\tilde\omega)$ with $j$ between $i$ and $i'$ do not
belong to $\frak n(\omega,\alpha,T)$. Thus we have described $\frak
n(\omega,\alpha,T)$ in terms of $\frak
n(\tilde\omega,\tilde\alpha,\tilde T)$.

\end{proof}

\begin{proposition}
\label{entropy} If conditions (\ref{cond}), (\ref{condf}) are
satisfied, then $H_{\tilde\mu}(\tilde\alpha)=\infty$.
\end{proposition}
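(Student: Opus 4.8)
The plan is to compute the masses of the atoms of $\tilde\alpha$ explicitly and then bound the resulting entropy series from below. First I would determine $\tilde\mu(\tilde C_a)$: since $\tilde C_a$ consists of the points $(\omega,u)$ with $\omega\in C_a$ and $0\le u\le f_0(a)$, and the range of admissible integers $u$ has counting measure $f_0(a)+1$, integration against $\mu\times\kappa$ gives
$$
\tilde\mu(\tilde C_a)=\gamma\,(f_0(a)+1)\,\pi_a=:q_a,
$$
where $\gamma>0$ is the normalizing constant of $\tilde\mu$. Condition (\ref{condf}) is exactly what guarantees $\int_\Omega f\,d\mu<\infty$, hence that $\gamma$ is finite and $\tilde\mu$ is a genuine probability measure; in particular $\sum_a q_a=1$, so $q_a\to0$. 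Writing $\eta(t):=-t\log t$, we thus have $H_{\tilde\mu}(\tilde\alpha)=\sum_a\eta(q_a)$.

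The naive route---expanding $\log q_a=\log\gamma+\log(f_0(a)+1)+\log\pi_a$ and splitting the entropy into three separate sums---runs into what I expect to be the main obstacle: the sum $\sum_a(f_0(a)+1)\pi_a\log(f_0(a)+1)$ need not converge under (\ref{condf}) alone, so one is left with an indeterminate $\infty-\infty$. I would avoid this entirely. The key device is the elementary fact that $\eta$ is increasing on $[0,1/e]$, since $\eta'(t)=-\log t-1>0$ there. Because $f_0(a)+1\ge1$ we have $q_a\ge\gamma\pi_a$ for every $a$; and since $q_a\to0$, for all but finitely many $a$ both $\gamma\pi_a$ and $q_a$ lie in $[0,1/e]$, whence $\eta(q_a)\ge\eta(\gamma\pi_a)$.

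Summing this inequality over the cofinite set of such $a$ (the finitely many excluded atoms contribute only a finite constant $C$), I would obtain
$$
H_{\tilde\mu}(\tilde\alpha)\ \ge\ \sum_a\eta(\gamma\pi_a)-C=\gamma\Big(-\sum_a\pi_a\log\pi_a\Big)-\gamma\log\gamma-C.
$$
By the first relation in (\ref{cond}), $-\sum_a\pi_a\log\pi_a=H_\mu(\alpha)=\infty$, so the right-hand side is infinite and the proposition follows. I expect the monotonicity step to be the crux: it converts the delicate cancellation between the $\log(f_0(a)+1)$ and $\log\pi_a$ contributions into a clean lower bound by $\gamma H_\mu(\alpha)$, and it is the only place where care is needed, namely restricting to the region $[0,1/e]$ and discarding the finitely many large atoms.
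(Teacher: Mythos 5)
Your proof is correct, but it is not the paper's proof --- and, notably, the route you dismiss as ``naive'' is precisely the route the paper takes. The paper writes $\tilde\mu(\tilde C_a)=\gamma\pi_af_0(a)$ (your $\gamma\pi_a(f_0(a)+1)$ is the count consistent with the computations in the proof of Theorem~\ref{TeorEntrop}; the discrepancy is immaterial here), expands the logarithm, and splits $H_{\tilde\mu}(\tilde\alpha)$ into three sums as in (\ref{InfEntr}): the first equals $-\log\gamma$, the second is $+\infty$ by (\ref{cond}), and the third, $-\gamma\sum_a\pi_af_0(a)\log f_0(a)$, is estimated from \emph{above} using concavity of $v\mapsto-v\log v$. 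But that three-way splitting yields $H_{\tilde\mu}(\tilde\alpha)=\infty$ only if the third sum, which is nonpositive, is also bounded \emph{below}, i.e.\ finite; an upper bound does not exclude that it equals $-\infty$, in which case the decomposition is the indeterminate $\infty-\infty$ you warned about. And (\ref{condf}) alone indeed does not force $\sum_a\pi_af_0(a)\log f_0(a)<\infty$: for the chain of Section~3, where $\pi_n\asymp(n\log^2n)^{-1}$, the choice $f_0(n)\asymp\log n/(\log\log n)^2$ satisfies (\ref{condf}) while this sum diverges like $\sum_n(n\log n\log\log n)^{-1}$. Your monotonicity device --- $\eta(t)=-t\log t$ increasing on $[0,1/e]$, the pointwise comparison $q_a\ge\gamma\pi_a$, and discarding the finitely many atoms with $q_a>1/e$ --- replaces this delicate cancellation by the one-sided bound
\begin{equation*}
H_{\tilde\mu}(\tilde\alpha)\ \ge\ \gamma\Big(-\sum_{a\in A}\pi_a\log\pi_a\Big)-\gamma\log\gamma-C\ =\ \infty,
\end{equation*}
which uses nothing beyond $f_0\ge1$ and the finiteness of $\gamma$. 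So the comparison is this: the paper's expansion is more explicit and, when $\sum_a\pi_af_0(a)\log f_0(a)<\infty$, gives finer information about where the infinite entropy comes from; your argument is less explicit but is valid under the stated hypotheses (\ref{cond}), (\ref{condf}) alone, and in fact closes a genuine gap in the published proof rather than merely rephrasing it.
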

\begin{proof}
By definition
\begin{align}
\label{InfEntr} H_{\tilde\mu}(\tilde\alpha)=&-\sum_{a\in
A}\tilde\mu(\tilde C_a)\log\tilde\mu(\tilde C_a)= -\sum_{a\in
A}\gamma\pi_af_0(a)\log[\gamma\pi_af_0(a)] \notag\\
&=-\gamma\sum_{a\in A}\pi_af_0(a)\log\gamma-\gamma\sum_{a\in
A}\pi_af_0(a)\log\pi_a \notag\\ &-\gamma\sum_{a\in A}\pi_af_0(a)\log
f_0(a).
\end{align}
The first sum in the right hand side of (\ref{InfEntr}) is
$-\log\gamma$, the second sum equals $+\infty$ (see (\ref{cond})).
At last, due to the concavity of the function $v\mapsto -v\log v$,
\begin{align*}
-\gamma\sum_{a\in A}\pi_af_0(a)\log f_0(a)&=\gamma\sum_{a\in
A}\pi_a(-f_0(a)\log f_0(a))\\&\le\gamma\sum_{a\in
A}\pi_af_0(a)\log\sum_{a\in A}\pi_af_0(a)=-\log\gamma.
\end{align*}
\end{proof}
The main result of this section is the following
\begin{theorem}
\label{TeorEntrop} If $T$ is ergodic, then
$$
H_{\tilde\mu}(\tilde\alpha|\vee_{i=1}^\infty\tilde
T^{-i}\tilde\alpha)=h_{\tilde\mu}(\tilde T).
$$
\end{theorem}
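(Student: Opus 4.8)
The plan is to evaluate the two sides separately and show each equals $c\,h_\mu(T)$, where $c:=\tilde\mu(\{u=0\})$ is the $\tilde\mu$-measure of the bottom level. For the right-hand side I would use Abramov's formula: since $\tilde T$ is a suspension over $(T,\mu)$, the base map $T$ is exactly the first-return map of $\tilde T$ to the bottom level (from $(\omega,0)$ one first returns to $\{u=0\}$ at $(T\omega,0)$), and the normalized restriction of $\tilde\mu$ to the bottom level is $\mu$; hence $h_{\tilde\mu}(\tilde T)=c\,h_\mu(T)$. Because $\alpha$ is a countable generator for the Markov shift with $H_\mu(\alpha)=\infty$, I would then invoke Pitskel's result (see (\ref{S-M-B}), (\ref{entropT}), (\ref{EntropCond})) to write $h_\mu(T)=H_\mu(\alpha|T^{-1}\alpha)$, which is finite by (\ref{cond}).

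For the left-hand side I would compute the conditional entropy directly from its information function $I(\tilde\omega)=-\log\tilde\mu\big(\tilde\alpha(\tilde\omega)\,\big|\,\vee_{i=1}^\infty\tilde T^{-i}\tilde\alpha\big)(\tilde\omega)$, where $\tilde\alpha(\tilde\omega)$ is the atom containing $\tilde\omega$, rather than as a limit of $H_{\tilde\mu}(\tilde\alpha|\vee_{i=1}^n\tilde T^{-i}\tilde\alpha)$. The key observation is that the $(\tilde\alpha,\tilde T)$-past of a point $(\omega,u)$ is the symbol sequence consisting of a partial run of $\omega_0$ of length $u$ followed by full runs of $\omega_1,\omega_2,\dots$, the run of $\omega_j$ having length $f_0(\omega_j)+1$. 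From this structure I claim: (i) off the bottom level ($u>0$) the current symbol equals $\tilde X_{-1}$ and is therefore measurable with respect to the past, so $I=0$; and (ii) on the bottom level ($u=0$) the past determines precisely $\omega_1,\omega_2,\dots$ but leaves $\omega_0$ free, so the conditional law of $\tilde X_0=\omega_0$ given the past coincides with the $\mu$-conditional law of $\omega_0$ given $\omega_1,\omega_2,\dots$. Since $I$ is supported on the bottom level, where $\tilde\mu=c\,\mu$, integration gives $H_{\tilde\mu}(\tilde\alpha|\vee_{i=1}^\infty\tilde T^{-i}\tilde\alpha)=c\,H_\mu(\omega_0|\omega_1,\omega_2,\dots)$.

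It remains to identify $H_\mu(\omega_0|\omega_1,\omega_2,\dots)$ with $h_\mu(T)$. The time-reversed process is again a stationary Markov chain with the same stationary distribution $\pi$ (so still $-\sum_a\pi_a\log\pi_a=\infty$) and with $h(T^{-1})=h(T)<\infty$, so Pitskel's result applies to $T^{-1}$ as well and yields $H_\mu(\omega_0|\omega_1,\omega_2,\dots)=H_\mu(\alpha|T\alpha)=h_\mu(T^{-1})=h_\mu(T)$. Together with the previous paragraph this makes both sides equal to $c\,h_\mu(T)$, which proves the theorem.

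The main obstacle I anticipate is the rigorous justification of (i)--(ii), namely that $I$ is genuinely concentrated on the bottom level with the stated conditional law. The usual shortcut is unavailable: because $H_{\tilde\mu}(\tilde\alpha)=\infty$ (Proposition \ref{entropy}), the quantity $\lim_n\frac1n H_{\tilde\mu}(\vee_{i=0}^{n-1}\tilde T^{i}\tilde\alpha)$ equals $+\infty$ rather than $h_{\tilde\mu}(\tilde T)$, and the martingale convergence $H_{\tilde\mu}(\tilde\alpha|\vee_{i=1}^n\tilde T^{-i}\tilde\alpha)\to H_{\tilde\mu}(\tilde\alpha|\vee_{i=1}^\infty\tilde T^{-i}\tilde\alpha)$ need not hold. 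The delicate step in (i) is to exclude that a bottom-level point shares its $(\tilde\alpha,\tilde T)$-past with an off-bottom point; this holds because the initial run length of the past is an exact multiple of $f_0(\omega_1)+1$ precisely when $u=0$, so the parse of the past into columns, and hence the current column, is unambiguous exactly off the bottom level. Controlling the coincidences $\omega_j=\omega_{j+1}$, which merge adjacent runs, is what makes this bookkeeping the heart of the proof.
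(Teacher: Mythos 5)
Your overall strategy is sound, but as written it computes the wrong one of two quantities that, in this infinite-entropy setting, cannot be identified with each other for free. The $\sigma$-algebra in the statement, $\vee_{i=1}^\infty\tilde T^{-i}\tilde\alpha$, is $\sigma(\tilde X_1,\tilde X_2,\dots)$: its atoms record the symbols of $\tilde T\tilde\omega,\tilde T^2\tilde\omega,\dots$. For $\tilde\omega=(\omega,u)$ this sequence is a partial run of $\omega_0$ of length $f(\omega)-u$ followed by full runs of $\omega_{-1},\omega_{-2},\dots$ (recall $(T\omega)_0=\omega_{-1}$). Consequently the conditional information is concentrated on the \emph{top} level $L$, not the bottom level, and there the conditional law of $\tilde X_0$ is the forward kernel $p_{a_1,\cdot}$ with $a_1=\omega_{-1}$; this is exactly the content of the paper's ``very good atom'' computation (\ref{CondMeas1}), and of its observation that $VG_n\cap\tilde T^{-1}\tilde C_a=L\cap G_n\cap\tilde T^{-1}\tilde C_a$. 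The sequence you analyse --- a run of $\omega_0$ of length $u$ followed by runs of $\omega_1,\omega_2,\dots$ --- is the backward orbit, i.e.\ $\sigma(\tilde X_{-1},\tilde X_{-2},\dots)=\vee_{i=1}^\infty\tilde T^{\,i}\tilde\alpha$, so your argument proves the theorem with $\tilde T$ replaced by $\tilde T^{-1}$. For partitions of finite entropy the two conditional entropies agree, since both equal $h(\tilde T,\tilde\alpha)=h(\tilde T^{-1},\tilde\alpha)$; but that shortcut is exactly what is unavailable for infinite-entropy generators, so the flip must be repaired, not waved away. The repair is easy and even simplifies your proof: run the same analysis on the forward orbit, so the information sits on $L$ with kernel $p_{a,b}$, and then Pitskel's result for the \emph{original} chain gives $-\sum_{a,b}\pi_ap_{a,b}\log p_{a,b}=h_\mu(T)$ directly; your entire third paragraph (time reversal and Pitskel for $T^{-1}$) becomes unnecessary.

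Modulo that reorientation, your route is genuinely different from the paper's in its scaffolding, and the difference is instructive. The paper computes $H_{\tilde\mu}(\tilde\alpha|\vee_{i=1}^n\tilde T^{-i}\tilde\alpha)$ for finite $n$ and lets $n\to\infty$; to know these converge to the left-hand side of the theorem it must first prove $H_{\tilde\mu}(\tilde\alpha|\tilde T^{-1}\tilde\alpha)<\infty$ (part 1 of its proof, a page of estimates) and invoke Rokhlin's convergence theorem. You instead identify the conditional probabilities with respect to the full one-sided $\sigma$-algebra and integrate the information function, which is legitimate by the very definition of conditional entropy and needs no entropy-convergence theorem; your caveat that monotone convergence of conditional entropies is not automatic is correct (it does hold here, but only because of the paper's finiteness lemma --- which your organization renders unnecessary). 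The rigorous version of your (i)--(ii) is obtained by applying L\'evy's upward martingale theorem to each fixed set $\tilde C_b$: the finite-$n$ conditional probabilities are precisely the paper's formulas (\ref{CondMeas1}), (\ref{CondBad}), and on the full-measure set of names without constant tails (the paper's $\bar\Omega$) they stabilize, by your correct parsing observation (the level is the initial run length reduced modulo $f_0(\cdot)+1$), to point masses off the distinguished level and to the Markov kernel on it. So the combinatorial core coincides with the paper's good/very-good/bad atom analysis; what your approach buys is the elimination of the finiteness lemma and of Rokhlin's theorem, at the cost of handling conditional measures with respect to a non-atomic $\sigma$-algebra directly. Your Abramov step for the right-hand side is the same as the paper's and is correct.
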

\begin{proof}
1. Denote
$$
\tilde\alpha_m^n(\tilde T):=\vee_{i=m}^n\tilde T^{_i}\tilde\alpha,\
\ m,n\in\mathbb Z,\ m<n,\ \ \tilde\alpha^-_{\tilde T}:
=\vee_{i=1}^\infty\tilde T^{-i}\tilde\alpha.
$$
Clearly, $\tilde\alpha_{-n}^{-1}(\tilde
T)\nearrow\tilde\alpha^-_{\tilde T}$, and we know that
$H(\tilde\alpha|\tilde\alpha_{-n}^{-1}(\tilde T))\searrow
H(\tilde\alpha|\alpha^-_{\tilde T})$ if $H(\tilde\alpha|\tilde
T^{-1}\tilde\alpha)<\infty$ (see \cite{R}, 5.11). We first show that
this condition is satisfied.

To this end we find, for all $a,b\in A$, the conditional measure
\begin{equation}
\label{cond_prob} \tilde\mu(\tilde C_b|\tilde T^{-1}\tilde
C_a)=\frac{\tilde\mu(\tilde C_b\cap T^{-1}\tilde
C_a)}{\tilde\mu(\tilde T^{-1}\tilde C_a)}.
\end{equation}
By the definition of $T,\ \tilde T$ and $\mu,\ \tilde\mu$, if $b\ne
a$, then
\begin{align}
\label{numerat1} \tilde\mu(\tilde C_b & \cap\tilde T^{-1}\tilde
C_a)=\tilde\mu(\{\tilde \omega\in\tilde C_b:\tilde
T\tilde\omega\in\tilde C_a\}) \notag \\
&=\tilde\mu(\{\tilde \omega=(\omega,u):\omega\in C_b,\,T\omega\in
C_a,\,u=f_0(b)\})=\gamma\pi_ap_{a,b},
\end{align}
while
\begin{align}
\label{numerat2} \tilde\mu(\tilde C_a\cap\tilde T^{-1}\tilde
C_a)&=\tilde\mu(\{\tilde\omega=(\omega,u):\omega\in C_a,\
u<f_0(a)\})\notag \\
& +\tilde\mu(\{\tilde\omega =(\omega,u):\omega,T\omega\in C_a,\
u=f_0(a)\}) \notag \\
& =\gamma(\pi_a(f_0(a)+p_{a,a}).
\end{align}
From (\ref{cond_prob }) -- (\ref{numerat2}) we obtain
$\tilde\mu(\tilde C_b|\tilde T^{-1}\tilde
C_a)=p_{a,b}(1+f_0(a))^{-1}$ if $a\ne b$, and $\tilde\mu(\tilde
C_a|\tilde T^{-1}\tilde C_a)=(p_{a,a}+f_0(a))(1+f_0(a))^{-1}$.

Therefore,
\begin{align}
\label{FinEnt1} H(\tilde\alpha & |\tilde
T^{-1}\tilde\alpha)=-\sum_{a\in A}\mu(\tilde T^{-1}\tilde
C_a)\big[\tilde\mu(\tilde C_a|\tilde T^{-1}\tilde C_a)\log\mu(\tilde
C_a|\tilde T^{-1}\tilde C_a)\notag \\
& +\sum_{b\in A\setminus\{a\}}\tilde\mu(\tilde C_b|\tilde
T^{-1}\tilde
C_a)\log\tilde\mu(\tilde C_b|\tilde T^{-1}\tilde C_a)\big]\notag\\
& =-\gamma\sum_{a\in
A}(f_0(a)+1)\pi_a\left[\frac{f_0(a)+p_{a,a}}{f_0(a)+1}
\log\frac{f_0(a)+p_{a,a}}{f_0(a)+1}\right.\notag \\ &+ \sum_{b\in A
\setminus\{a\}}\left.\frac{p_{a,b}}{f_0(a)+1}
\log\frac{p_{a,b}}{f_0(a)+1}\right].
\end{align}
We open the square brackets in (\ref{FinEnt1}) and estimate each sum
obtained. The first sum is
\begin{align*}
S_1: & ==\gamma\sum_{a\in
A}\pi_a(f_0(a)+1)\frac{f_0(a)+p_{a,a}}{f_0(a)+1}
\log\frac{f_0(a)+p_{a,a}}{f_0(a)+1}\\
& =-\gamma\sum_{a\in
A}\pi_a(f_0(a)+p_{a,a})\log\frac{f_0(a)+p_{a,a}}{f_0(a)+1}.
\end{align*}
Since $f_0(a)\ge 1$ and $p_{a,a}\ge 0$, we have
\begin{equation}
\label{ratio} 1/2\le\frac{f_0(a)+p_{a,a}}{f_0(a)+1}\le 1, \ \ a\in
A,
\end{equation}
so that, by (\ref{condf}), $S_1<\infty$.

The second sum is
\begin{align*}
S_2: & =-\gamma\sum_{a\in A}\sum_{b\in A\setminus\{a\}}
\pi_a(f_0(a)+1)\frac{p_{a,b}}{f_0(a)+1}\log\frac{p_{a,b}}{f_0(a)+1}\\
& =-\gamma\sum_{a\in A}\sum_{b\in
A\setminus\{a\}}\pi_ap_{a,b}\log\frac{p_{a,b}}{f_0(a)+1}\\
& =-\gamma\sum_{a\in A}\sum_{b\in
A\setminus\{a\}}\pi_ap_{a,b}\log p_{a,b}\\
& +\gamma\sum_{a\in A}\sum_{b\in
A\setminus\{a\}}\pi_ap_{a,b}\log(f_0(a)+1)\\
& \le\gamma h_\mu(T)+\gamma\sum_{a\in a}\pi_a(f_0(a)+1)<\infty.
\end{align*}
Here we used Pitskel's result from \cite{P} (because $\mu$ is a
Markov measure) and (\ref{condf}).

Thus the inequality $H(\tilde\alpha|\tilde
T^{-1}\tilde\alpha)<\infty$ is proved.

2. We say that an atom
\begin{equation}
\label{C} \tilde C=\tilde T^{-1}\tilde C_{a_1}\cap\dots\cap\tilde
T^{-n}\tilde C_{a_n},\ \ n\ge 2
\end{equation}
of the partition $\tilde\alpha_{-n}^{-1}(\tilde T)$ is good if there
exists $k<n$ such that $a_{k+1}\ne a_1$. From the definition of
$\bar\Omega$ (see the proof of Proposition \ref{generat}) it follows
that each point of $\bar\Omega$ belongs to a good atom of
$\tilde\alpha_{-n}^{-1}(\tilde T)$, beginning with some $n$. Since
$G_n$, the union of good atoms of $\tilde\alpha_{-n}^{-1}(\tilde
T)$, does not decrease in $n$, we see that
$\lim_{n\to\infty}\tilde\mu(G_n)=1$.

We wish to find $\mu(\tilde C_b|\tilde C)$, where $\tilde C\subset
G_n$ and $b\in A$. Assume that
\begin{equation*}
\label{k,k+1} a_1=\dots=a_k\ne a_{k+1}
\end{equation*}
(see (\ref{C})). If $k$ is not of the form $k=m(f_0(a_1)+1)$ for
some $m\in\mathbb N$, then no $\omega\in\tilde C$ can be at the top
level. Hence $\mu(\tilde C_a|\tilde C)=1$.

Now let $k=m(f_0(a_1)+1)$. We say that such $\tilde C$ is a very
good atom and denote the union of very good atoms by $VG_n$. For
such atom we define by induction a sequence of symbols
$a'(1),\,a'(2),\dots$, where $a'(1)=a_{k+1}$ and if
$a'(1),\dots,a'(m)$ are already defined and $a'(m)=a_i$, then
$a'(m+1):=a_{i+f_0(a_i)+1}$.

All points $\tilde\omega\in\tilde C_b\cap\tilde C$ have identical
$(\tilde\alpha,\tilde T)$-subnames from $0$ to $n$, while all
$\omega$ such that $\tilde\omega=(\omega,f_0(b))\in\tilde
C_b\cap\tilde C$ have identical $(\alpha,T)$-subnames from $0$ to
$n'$, where $n'=\max\{l:f_0(a'(0))+\dots+f_0(a'(l))\le n\}$ (notice
that $a'(0)=b$). It follows from the definitions of the measures
$\mu$ and $\tilde\mu$ that
$$
\tilde\mu(\tilde C_b\cap\tilde
C)=\gamma\pi_{a'(n')}p_{a'(n'),a'(n'-1)}\cdots p_{a'(1),b}.
$$
Since $\tilde\mu(\tilde C)=\sum_{b\in A}\tilde\mu(\tilde
C_b\cap\tilde C)$, we have
\begin{equation}
\label{CondMeas1} \tilde\mu(\tilde C_b|\tilde C)=p_{a_1,b}=p_{a,b},
{\text when\ }\tilde C\subset VG_n\cap\tilde T^{-1}\tilde C_a.
\end{equation}
Hence
\begin{equation}
\label{GoodEnt} H_{\tilde\mu}(\tilde\alpha|\tilde C)=-\sum_{b\in
A}p_{a,b}\log p_{a,b}
\end{equation}
if $\tilde C$ is a very good atom from $\tilde T^{-1}\tilde C_a$,
and the conditional entropy is zero if $\tilde C$ is a good but not
very good atom.

Next consider a bad atom $\tilde C=\cap_{i=1}^n \tilde T^{-i}\tilde
C_a$ for some $a\in A$. We denote it by $\tilde C(a)$. For every
$b\in A$,
\begin{equation}
\label{CbC} \tilde\mu(\tilde C_b\cap\tilde C(a))=\tilde\mu(\tilde
C_b\cap\tilde C(a)\cap L)+\tilde\mu(\tilde C_b\cap\tilde
C(a)\cap(\tilde\Omega\setminus L)).
\end{equation}
The first term in (\ref{CbC}) is
\begin{align}
\label{CbC1} \tilde\mu(\tilde C_b\cap\tilde C(a)&\cap
L) \notag \\
=&\tilde\mu(\{\tilde\omega\in\tilde C_b\cap L:\tilde
T^i\tilde\omega\in\tilde C_a,\
i=1,\dots,n\})=\pi_a(p_{a,a})^zp_{a,b},
\end{align}
where $z=[\frac{n}{f_0(a)+1}]$. The second term vanishes if $b\ne
a$, while
\begin{align}
\label{CbC2} \tilde\mu(\tilde C_a\cap\tilde
C(a)\cap(\tilde\Omega\setminus
L))&=\tilde\mu(\{\tilde\omega\in\tilde C_a\cap(\tilde\Omega\setminus
L):
\tilde T^i\tilde\omega\in\tilde C_a,\ i=1,\dots,n\})\notag\\
&=f_0(a)\pi_a(p_{a,a})^z.
\end{align}

Similarly,
\begin{align}
\label{CbC3} \tilde\mu(\tilde C(a))=\tilde\mu(\tilde C(a)\cap L) &
+\tilde\mu(\tilde C(a)\cap(\tilde\Omega\setminus L))=\sum_{b\in
A}\pi_a(p_{a,a})^zp_{a,b}\notag \\
&+f_0(a)\pi_a(p_{a,a})^z =\pi_a(p_{a,a})^z(1+f_0(a))
\end{align}

From (\ref{CbC})--(\ref{CbC3}) we obtain
\begin{equation}
\label{CondBad}\tilde\mu(\tilde C_b|\tilde C(a))=
\left\{\begin{aligned}& \frac{p_{a,b}}{1+f_0(a)},\ b\ne a,\\
& \frac{p_{a,a}+f_0(a)}{1+f_0(a)},\ b=a.\\
\end{aligned}\right.
\end{equation}
Hence
\begin{align}
\label{BadEnt} H_{\tilde\mu}(\tilde\alpha|\tilde C(a)) &
=-\frac{p_{a,a}+f_0(a)}{1+f_0(a)}\log\frac{p_{a,a}+f_0(a)}{1+f_0(a)}
\notag \\ & -\sum_{b\in
A\setminus\{a\}}\frac{p_{a,b}}{1+f_0(a)}\log\frac{p_{a,b}}{1+f_0(a)}
\end{align}
if $\tilde C(a)\subset\tilde T^{-1}\tilde C_a$ is a bad atom of
$\tilde\alpha_{-n}^{-1}(\tilde T)$.

3. We complete the proof by finding the asymptotics of the sum
\begin{equation}
\label{asympt} \sum_{\tilde C}\mu(\tilde C)H_\mu(\tilde\alpha|\tilde
C)=\sum_{\tilde C\subset G_n}\mu(\tilde C)H_\mu(\tilde\alpha|\tilde
C)+\sum_{\tilde C\subset B_n}\mu(\tilde C)H_\mu(\tilde\alpha|\tilde
C)
\end{equation}
as $n\to\infty$. Consider each of these sums separately.

Since $H_\mu(\tilde\alpha|\tilde C)=0$ if $\tilde C\subset
G_n\setminus VG_n$ and due to (\ref{GoodEnt}), the first sum equals
\begin{equation}
\label{FirstSum} \sum_{\tilde C\subset VG_n}\mu(\tilde
C)H_\mu(\tilde\alpha|\tilde C)=-\sum_{a\in A}\,\sum_{\tilde C\subset
VG_n\cap\,\tilde T^{-1}\tilde C_a}\mu(\tilde C)\sum_{b\in
A}p_{a,b}\ln p_{a,b}.
\end{equation}
Clearly, $VG_n\cap\,\tilde T^{-1}\tilde C_a=L\cap G_n\cap\,\tilde
T^{-1}\tilde C_a$, and since $\tilde\mu(G_n)\to 1$ as $n\to\infty$,
we have
$$
\lim_{n\to\infty}\tilde\mu(VG_n\cap\,\tilde T^{-1}\tilde
C_a)=\tilde\mu(L\cap\,\tilde T^{-1}\tilde C_a)=\sum_{b\in
A}\gamma\pi_ap_{a,b}=\gamma\pi_a,
$$
so that (see (\ref{FirstSum}))
\begin{equation}
\label{main} \lim_{n\to\infty}\sum_{\tilde C\subset VG_n}\mu(\tilde
C)H_\mu(\tilde\alpha|\tilde C)=\sum_{a\in A}\gamma\pi_a\sum_{b\in
A}p_{a,b}\log p_{a,b}=\gamma h_\mu(T).
\end{equation}

Before dealing with the second sum in (\ref{asympt}) we state the
following simple auxiliary assertion. Let $r_i\ge 0,\ s_{ij}\ge 0,\
i,j\in\mathbb N$ and $\sum_{i\in\mathbb N}r_i<\infty$,
$\sup_{i,j}s_{i,j}<\infty$, $\lim_{j\to\infty}s_{i,j}=0$ for all
$i$. Then
\begin{equation}
\label{auxil} \lim_{j\to\infty}\sum_{i\in\mathbb N}r_is_{i,j}=0.
\end{equation}

For $\tilde C\subset B_n\cap\tilde T^{-1}\tilde C_a$ we write
$\tilde C(a)$. By (\ref{BadEnt}), for each $a\in A$,
$$
H_{\tilde\mu}(\tilde\alpha|\tilde
C(a))=-\frac{p_{a,a}+f_0(a)}{1+f_0(a)}\log\frac{p_{a,a}+f_0(a)}
{1+f_0(a)}-\sum_{b\in A\setminus
{a}}\frac{p_{a,b}}{1+f_0(a)}\log\frac{p_{a,b}}{1+f_0(a)},
$$
so that
\begin{align}
\label{BadSum} \sum_{a\in A}\tilde\mu(\tilde
C(a))H_{\tilde\mu}(\tilde\alpha|\tilde C(a))=-\sum_{a\in
A}\pi_a(1+f_0(a))(p_{a,a})^z\frac{p_{a,a}+f_0(a)}{1+f_0(a)}\notag \\
-\sum_{a\in A}\sum_{b\in A\setminus\{a\}}
\pi_a(1+f_0(a))(p_{a,a})^z\frac{p_{a,b}}{1+f_0(a)}
\log\frac{p_{a,b}}{1+f_0(a)}.
\end{align}
By (\ref{ratio}) the first sum in (\ref{BadSum}) does not exceed
$\sum_{a\in A}\pi_a(1+f_0(a))(p_{a,a})^z\log\,2$. It is clear that
$0\le p_{a,a}<1$ (the Markov chain $X$ is assumed to be ergodic) and
$z=z(a,n)\to\infty$ as $n\to\infty$. We now can use the above
auxiliary assertion by putting $r_a:=\pi_a(1+f_0(a))$ (we identify
$a$ with $i$) and $s_{a,n}:=(p_{a,a})^z$ (here we identify $n$ with
$j$). From this assertion it follows that the first sum in
(\ref{BadSum}) vanishes as $n\to\infty$.

A similar argument shows that the second sum in (\ref{BadSum})
behaves in the same way. As a result we obtain (see (\ref{entropT})
$\lim_{n\to\infty}H_{\tilde\mu}(\tilde\alpha|\tilde\alpha_{-n}^{-1}
(\tilde T))=\gamma H_\mu(T)$. Now it ramains to use (\ref{main}) and
Abramov's formula according to which $h_{\tilde\mu}(\tilde T)=\gamma
h_{\mu}(T)$.
\end{proof}

\begin{theorem}
\label{Breiman} Let $\tilde C^n(\tilde\omega)$ be the atom of the
partition $\vee_1^{n-1}\tilde T^i\tilde\alpha$ that contains
$\tilde\omega$. Then
$$
\lim_{n\to\infty}\left|\frac{1}{n}\log\tilde\mu(\tilde
C^n(\tilde\omega))\right|= h_{\tilde\mu}(\tilde T),
$$
\end{theorem}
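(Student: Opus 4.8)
The plan is to reduce the statement to Pitskel's SMB theorem (\ref{S-M-B}) for the base chain $\mu$, exploiting the tower structure of $\tilde T$ rather than trying to run the classical Shannon--McMillan--Breiman argument directly on $\tilde\alpha$. The latter is the natural first instinct, but it fails here: since $H_{\tilde\mu}(\tilde\alpha)=\infty$ by Proposition \ref{entropy}, the one-step information function $-\log\tilde\mu(\tilde\alpha(\tilde\omega))$ is not integrable, so the maximal-inequality step of Breiman's proof, which requires $\sup_k$ of the conditional informations $-\log\tilde\mu(\tilde\alpha|\vee_{i=1}^{k}\tilde T^{-i}\tilde\alpha)$ to lie in $L^1$, is unavailable (only the $k\ge 1$ terms are integrable, by part 1 of the proof of Theorem \ref{TeorEntrop}). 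The whole construction is arranged precisely so that the infinite entropy sits only in the present coordinate, while on the base all relevant quantities are finite; so I would push the problem down to the base.

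First I would read off from the proofs of Proposition \ref{generat} and Theorem \ref{TeorEntrop} how a long $\tilde\alpha$-window of $\tilde\omega=(\omega,u)$ encodes a base window of $\omega$. Fix $\tilde\omega\in\bar\Omega$, a set of full measure. The atom $\tilde C^n(\tilde\omega)$ records the labels seen along the backward orbit $\tilde T^{-i}\tilde\omega$, $1\le i\le n-1$, as it runs down and across the towers. Let $m_n(\tilde\omega)$ be the number of these indices for which $\tilde T^{-i}\tilde\omega$ sits at the bottom level; each such visit advances the base by one step, so the window determines the $(\alpha,T)$-subname of $\omega$ over a block of length $m_n$, up to one partial tower at the far end. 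Applying Birkhoff's ergodic theorem to the indicator of the bottom level, whose $\tilde\mu$-measure is $\gamma$, gives $m_n/n\to\gamma$ almost everywhere.

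Next I would establish the measure comparison. The computation already carried out for very good atoms in the proof of Theorem \ref{TeorEntrop} shows that such an atom $\tilde C$ satisfies $\tilde\mu(\tilde C)=\gamma\,\pi_{a'(n')}p_{a'(n'),a'(n'-1)}\cdots p_{a'(2),a'(1)}$, i.e. exactly $\gamma$ times the $\mu$-measure of the associated base cylinder of length $\sim n\gamma$; one transfers this identity to the backward windows relevant here by $\tilde T$-invariance of $\tilde\mu$. For a generic $\tilde\omega$ and large $n$ the window $\tilde C^n(\tilde\omega)$ need not be very good, so I would sandwich it between the nearest very good sub- and super-windows $\tilde C^{(-)}\supset\tilde C^n(\tilde\omega)\supset\tilde C^{(+)}$; monotonicity of measure gives $\tilde\mu(\tilde C^{(+)})\le\tilde\mu(\tilde C^n(\tilde\omega))\le\tilde\mu(\tilde C^{(-)})$, and both bounding atoms are $\gamma$ times base cylinders whose lengths $m_n^{\pm}$ differ from $m_n$ by at most one base symbol (completing or discarding the single partial tower at the end), so $m_n^{\pm}/n\to\gamma$ as well.

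Combining these, on the common full-measure set where $\tilde\omega\in\bar\Omega$, where $m_n^{\pm}/n\to\gamma$, and where $\omega$ is generic for the base theorem, both bounds give
\[
\frac1n\log\tilde\mu(\tilde C^{(\pm)})=\frac{\log\gamma}{n}+\frac{m_n^{\pm}}{n}\cdot\frac{1}{m_n^{\pm}}\log\mu\big(C^{m_n^{\pm}}(\omega)\big)\longrightarrow-\gamma h_\mu(T),
\]
using Pitskel's theorem (\ref{S-M-B}) together with (\ref{entropT})--(\ref{EntropCond}) for the inner factor (note $m_n^{\pm}\to\infty$) and $m_n^{\pm}/n\to\gamma$. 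By the squeeze, $\frac1n\log\tilde\mu(\tilde C^n(\tilde\omega))\to-\gamma h_\mu(T)=-h_{\tilde\mu}(\tilde T)$, the last equality being Abramov's formula; taking absolute values proves the theorem. I expect the main obstacle to be exactly the bookkeeping at the end of the window: showing that a generic backward window is eventually good in the sense of Theorem \ref{TeorEntrop} and that the incomplete tower forces only a bounded difference between the sandwiching base-lengths $m_n^{\pm}$, so that both converge to the same value. Once that is settled, the convergence is carried entirely by the base SMB theorem and Abramov's formula, neatly sidestepping the infinite entropy of $\tilde\alpha$.
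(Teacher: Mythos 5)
Your argument is essentially correct, but it takes a genuinely different route from the paper, and your opening diagnosis is mistaken: the paper does precisely what you declared unavailable. It runs the classical Breiman argument on $(\tilde T,\tilde\alpha)$ itself, noting that infinite $H_{\tilde\mu}(\tilde\alpha)$ is harmless because the unconditional information function enters the telescoping decomposition only once (and $\tfrac1n$ times an a.e.\ finite function tends to zero); what the argument really needs is integrability of $\sup_{n\ge 1}g_n$ for the \emph{conditional} informations $g_n(\tilde\omega)=-\log\tilde\mu(\tilde C^0(\tilde\omega)|\tilde C^n(\tilde\omega))$, i.e.\ condition (\ref{sup}). The paper verifies (\ref{sup}) from the explicit five-case formula (\ref{CondBig}), which lists the only values the conditional probabilities can take — $p_{a,b}$, $1$, $0$, $p_{a,b}/(1+f_0(a))$, $(p_{a,a}+f_0(a))/(1+f_0(a))$ — all read off from the good/very-good/bad atom computations of Theorem \ref{TeorEntrop}, yielding an explicit integrable dominating function. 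Your route instead transfers everything to the base: window atoms containing at least one symbol change are exactly a base cylinder times a single level, hence have measure exactly $\gamma$ times a base cylinder measure; the rescaling $m_n/n\to\gamma$ comes from Birkhoff; Pitskel's theorem (\ref{S-M-B}) and Abramov's formula finish. The end-of-window bookkeeping you flag as the main obstacle is in fact routine on the full-measure set $\bar\Omega$ of Proposition \ref{generat}: once the window sees a jump, the height and the number of visited base coordinates are determined by the name, so the sandwich lengths differ by at most one. Comparing the two: the paper's proof reuses its earlier computations and isolates a reusable principle (SMB for countable partitions of infinite entropy under (\ref{sup})), together with an $L^1$ maximal bound that is stronger than mere a.e.\ convergence; your proof is more structural, explains the theorem as pure transfer of the base SMB theorem through the suspension, and would apply verbatim to any ergodic base process obeying an a.e.\ SMB theorem with finite entropy, not only a Markov one.
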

\begin{proof}
Examination of a standard proof of the SMB theorem for finite
partition (see, e.g., \cite{B}) shows that this proof applies to
countable infinite partitions, including partitions of infinite
entropy, provided that the following condition is  satisfied (we
state it for our case, using the above notation). For
$\tilde\omega\in\tilde\Omega$, let
$$
g_n(\tilde\omega):=-\log\tilde\mu(\tilde C^0(\tilde\omega)|\tilde
C^n(\tilde\omega)),
$$
where $\tilde C^0(\tilde\omega),\ \tilde C^n(\tilde\omega)$ are the
atoms of $\tilde\alpha$ and $\tilde\alpha_{-n}^n(\tilde T)$,
respectively, containing $\tilde\omega$. It is required that
\begin{equation}
\label{sup} \int_{\tilde\Omega}\sup_n
g_n(\tilde\omega)\tilde\mu(d\tilde\omega)<\infty.
\end{equation}

We establish (\ref{sup}) using some formulas from the above proof of
Theorem \ref{entropT}. Let $\tilde\omega\in\tilde C_b\cap\tilde
T^{-1}\tilde C_a$ for some $a,b\in A$. From (\ref{CondMeas1}),
(\ref{CondBad}) we see that
\begin{equation}
\label{CondBig}\tilde\mu(\tilde C^0(\tilde\omega)|\tilde
C^n(\tilde\omega))=
\left\{\begin{aligned}& p_{a,b},\
\tilde C^n(\tilde\omega)\subset VG_n,\\
& 1,\ \tilde C^n(\tilde\omega)\subset G_n\setminus VG_n,\ b=a,\\
& 0,\ \tilde C^n(\tilde\omega)\subset G_n\setminus VG_n,\ b\ne a,\\
& \frac{p_{a,b}}{1+f_0(a)},\
\tilde C^n(\tilde\omega)\subset B_n,\ b\ne a,\\
& \frac{p_{a,a}+f_0(a)}{1+f_0(a)},\
\tilde C^n(\tilde\omega)\subset B_n,\ b=a.\\
\end{aligned}\right.\\
\end{equation}
It is evident that if $b\ne a$, then
$$
\tilde\mu(\{\tilde\omega\in\tilde C_b\cap\tilde T^{-1}\tilde C_a:
\tilde C^n(\tilde\omega)\subset G_n\setminus VG_n\})=0.
$$
Hence, on a set of $\tilde\mu$-measure $1$,
$$
g_n(\tilde\omega)\le -\log
p_{a,b}-\log\frac{p_{a,b}}{1+f_0(a)}-
\log\frac{p_{a,a}+f_0(a)}{1+f_0(a)}.
$$
Since the right hand side of tis inequality does not depend on $n$,
one can replace its left hand side by $\sup_n g_n(\tilde\omega)$.
From this we obtain (see (\ref{ratio}))
\begin{align*}
\int_{\tilde\Omega}\sup_n g_n(\tilde\omega)\tilde\mu(d\tilde\omega)
& \le-\sum_{a\in A}\sum_{b\in A}(\pi_ap_{a,b}\log
p_{a,b}+\pi_ap_{a,b}\log p_{a,b})
\notag\\
& +\sum_{a\in A}\pi_a\log(1+f_0(a))- \sum_{a\in
A}\pi_a\log\frac{p_{a,a}+f_0(a)}{1+f_0(a)}\notag\\
&\le 2h_\mu(T)+\int_\Omega f(\omega)\mu(d\omega)+\log\,2<\infty.
\end{align*}
\end{proof}

\section{An example}
In this section we construct a Markov chain with an infinite
countable state set $A$, transition probabilities $p_{a,b}$, $a,b\in
A$, and stationary probabilities $\pi_a$, $a\in A$, that satisfy
conditions (\ref{cond}).

We take $A=\mathbb N$ and $a=i$. Let $p_{i,j}$ be of the form
$p_{i,1}=p_i$, $p_{i,i+1}=q_i$ and $p_{i,j}=0$ for $j\in\mathbb
N\setminus\{1,i+1\}$, where $p_i,q_i$ such that $p_i+q_i=1$ are to
be picked up.

Denote the stationary distribution by $\mathbf\pi$ and the
transition matrix by $\mathcal P$. By solving the equation
$\mathbf\pi\mathcal P=\mathbf\pi$ we obtain
$$
\pi_1=Q^{-1},\ \ \pi_n=Q^{-1}\Pi_{i=1}^{n-1}q_i,\ \
Q=1+\sum_{n=2}^\infty q_i.
$$
Hence $Q$ must be finite.

If we put
$$
q_1:=\frac{1}{2\log^2 2},\ \
q_n:=\frac{1}{(n+1)\log^2(n+1)\Pi_{i=1}^{n-1}q_i},\ n\ge 2,
$$
it is easy to show by induction that
$$
\Pi_{i=1}^{n-1}q_i=\frac{1}{n\log^2n},$$ so that $Q<\infty$.

On the other hand,
$$
-\pi_n\log\pi_n=
-Q^{-1}\Pi_{i=1}^{n-1}q_i\log(Q^{-1}\Pi_{i=1}^{n-1}q_i)
$$
so that
$$
-\sum_{n=1}^\infty\pi_n\log\pi_n=
Q^{-1}\sum_{n=2}^\infty\frac{1}{n\log\,n}+S,
$$
where $S$ is the sum of a converging series. So we see that the
first condition in (\ref{cond}) is satisfied.

Finally, the structure of the transition matrix $\mathcal P$ implies
that
\begin{align*}
\label{final}-&\sum_{k,l\in\mathbb N}\pi_kp_{k,l}\log\,p_{k,l}
=-\sum_{k\in\mathbb
N}\pi_k(p_{k,1}\log\,p_{k,1}+p_{k,k+1}\log\,p_{k,k+1})\\ \le &
\log2\sum_{k\in\mathbb N}\pi_k<\infty.
\end{align*}
Thus the second condition in (\ref{cond}) is also satisfied.

\begin{remark}
\label{function} 1. It is easy to understand that the stationary
Markov chain with the just constructed paprameters induces an
ergodic shift $T$ in its sample space.

2. Clearly there exists an unbounded function $f_0:A\to\mathbb N$
satisfying (\ref{condf}).
\end{remark}


\begin{thebibliography}{99}

\bibitem{B}
Billingsley, P. {\it Ergodic Theory and Information}. Wiley, 1965

\bibitem{KFS}
Kornfeld, I.P., Fomin, S.V., Sinai, Ya.G. {\it Ergodic Theory}.
Springer, New-York -- Berlin, 1980.

\bibitem{EM}
England, J.W., Martin, N.F.G. {\it  Mathematical theory of entropy}.
Addison-Wesley, 1981.

\bibitem{P} Pitskel, B.S. Some remarks concerning the individual
ergodic theorem of information theory. {\it Math. Notes}, 9:1, 54 --
60 (1971).

\bibitem{R}
Rokhlin, V.A. Lectures on entropy theory of transformations with
invariant measure (in Russian). {\it Uspekhi Mat. Nauk}, 22(5):3--56
(1967).
\end{thebibliography}
\end{document}